\newtheorem{theorem}{Theorem}
\newtheorem*{theorem*}{Theorem}
\newtheorem*{corol*}{Corollary}
\author{Sagar Shrivastava \& Vaibhav Pandey}
\date{}
\title{Ring of Real Analytic Functions on $[0,1]$}
\begin{document}
\maketitle

\begin{abstract}
We consider the ring of real analytic functions defined on $[0,1]$, i.e.
$$C^{\omega}[0,1] =\lbrace f :[0,1] \longrightarrow \mathbb{R} | f \text{ is analytic on } [0,1]\rbrace$$
In this article, we explore the nature of ideals in this ring. It is well known that the ring $C[0,1]$ of real valued continuous functions on $[0,1]$ has precisely the following maximal ideals:
$$\text{For } \gamma \in [0,1], M_{\gamma} := \lbrace f \in C[0,1]  | f(\gamma) =0\rbrace$$
It has been proved that each such $M_{\gamma}$ is infinitely generated, in-fact uncountably generated \cite{sury2011uncountably}. Observe that $C^{\omega}[0,1]$ is a subring of $C[0,1]$ 

We prove that for any $\gamma$ in $[0,1]$, the contraction $M^{\omega}_{\gamma}$ of $M_{\gamma}$ under the natural inclusion of $C^{\omega}[0,1]$ in $C[0,1]$ is again a maximal ideal (of $C^{\omega}[0,1]$ ), and these are precisely all the maximal ideals of $C^{\omega}[0,1]$. Next we prove that each $M^{\omega}_{\gamma}$ is principal (though $M_{\gamma}$ is uncountably generated). Surprisingly,  this forces all the ideals of the ring $C^{\omega}[0,1]$ to be singly generated, i.e. $C^{\omega}[0,1]$ is a PID.
\end{abstract}
\newpage

To begin with, we observe that the maximal ideals of $C^{\omega}[0,1]$ are precisely $M^{\omega}_{\gamma} (\gamma \in [0,1])$.

Firstly, $M^{\omega}_{\gamma}$ is the kernel of the following surjective homomorphism:
\begin{align*}
\Phi_{\gamma} : C^{\omega}[0,1]&\longrightarrow \mathbb{R}\\
f &\mapsto f(\gamma)
\end{align*}
Hence $C^{\omega}[0,1] /M^{\omega}_{\gamma} \cong \mathbb{R}$. Therefore $M^{\omega}_{\gamma}$ is maximal.

Further, the fact that any maximal ideal of $C^{\omega}[0,1]$ is of the form $M^{\omega}_{\gamma}$  for some $\gamma \in [0,1]$ is merely a consequence of compactness of $[0,1]$. The proof for this is precisely the proof of the fact that any maximal ideal of $C[0,1]$ is of the form $M_{\gamma}$.

\begin{theorem} \label{th1}
$M^{\omega}_{\gamma}$ is a principal ideal generated by $(x-\gamma)$
\end{theorem}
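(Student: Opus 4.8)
The plan is to prove the two inclusions $\bigl((x-\gamma)\bigr) \subseteq M^\omega_\gamma$ and $M^\omega_\gamma \subseteq \bigl((x-\gamma)\bigr)$. The first is immediate: $x \mapsto x-\gamma$ is a polynomial, hence real analytic on $[0,1]$, and it vanishes at $\gamma$, so it lies in $M^\omega_\gamma$ and therefore so does every one of its multiples. All the content is in the reverse inclusion.

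So fix $f \in M^\omega_\gamma$, that is, $f \in C^\omega[0,1]$ with $f(\gamma)=0$. I want to produce $g \in C^\omega[0,1]$ with $f(x) = (x-\gamma)g(x)$ on $[0,1]$, and the only possible choice is
\[
g(x) = \begin{cases} \dfrac{f(x)}{x-\gamma}, & x \neq \gamma,\\[1ex] f'(\gamma), & x = \gamma. \end{cases}
\]
The theorem thus reduces to showing that this $g$ is real analytic on $[0,1]$.

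Since analyticity is a local property, I would verify it at each $x_0 \in [0,1]$ separately. If $x_0 \neq \gamma$, then near $x_0$ the function $g$ agrees with the quotient $f/(x-\gamma)$ of two real analytic functions whose denominator does not vanish at $x_0$; as the reciprocal of a locally nonvanishing analytic function is analytic and products of analytic functions are analytic, $g$ is analytic at $x_0$. At the remaining point $x_0=\gamma$, I would use the convergent power series of $f$ there: since $f(\gamma)=0$ we may write $f(x) = \sum_{n\geq 1} a_n (x-\gamma)^n$ on an interval about $\gamma$, whence termwise $g(x) = \sum_{n\geq 1} a_n (x-\gamma)^{n-1} = \sum_{m\geq 0} a_{m+1}(x-\gamma)^m$, a power series with the same radius of convergence that takes the value $a_1 = f'(\gamma)$ at $\gamma$, consistently with the definition of $g$. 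Hence $g \in C^\omega[0,1]$ and $f = (x-\gamma)g \in \bigl((x-\gamma)\bigr)$.

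The only nontrivial point — hence the main obstacle — is the behavior at $x=\gamma$, where one must upgrade ``$f$ analytic with $f(\gamma)=0$'' to ``$f/(x-\gamma)$ analytic at $\gamma$''; the power series computation above does exactly this, and it is precisely here (and nowhere else) that the hypothesis $f(\gamma)=0$, rather than mere continuity, is used. If one prefers the definition of analyticity via extension to a complex (or open real) neighborhood of $[0,1]$, the same step becomes the observation that $\gamma$ is a removable singularity of the extension of $f$ divided by $z-\gamma$, and the local pieces of $g$ glue together by the identity theorem and compactness of $[0,1]$ — routine bookkeeping with no bearing on the ideal-theoretic argument.
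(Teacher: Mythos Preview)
Your proof is correct and follows essentially the same approach as the paper: define $g(x)=f(x)/(x-\gamma)$, check analyticity away from $\gamma$ via the quotient of analytic functions with nonvanishing denominator, and check analyticity at $\gamma$ by factoring $(x-\gamma)$ out of the Taylor expansion of $f$. The only cosmetic difference is that the paper justifies convergence of the resulting series by comparing it termwise with the Taylor series of $f'$, whereas you invoke the (equivalent and more direct) observation that shifting indices does not change the radius of convergence.
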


\begin{proof}
Before we come to the main proof, note that
$$M^{\omega}_{\gamma} = \lbrace f(x) -f(\gamma) | f \in C^{\omega}[0,1] \rbrace$$
Let $f$ be any function in $C^{\omega}[0,1]$, we define
$$g(x) := \frac{f(x) -f(\gamma)}{x-\gamma}$$
We claim that $g$ is also analytic on $[0,1]$. For a moment, assume the claim. Then we have $f(x) -f(\gamma) =  (x-\gamma) g(x)$, and hence any arbitrary element $f$ of $M^{\omega}_{\gamma}$ is contained in the ideal generated by $(x-\gamma)$, hence $M^{\omega}_{\gamma}$ is a principal ideal.

So now let us prove the claim. We prove our claim in two parts. First we show that there exists a convergent Taylor series for $g$ in a neighborhood of $\gamma$ and then we show that in any neighborhood not containing $\gamma$, it is analytic, and hence analytic on the whole of $[0,1]$.

Since $f$ is analytic, we can get a Taylor series expansion of $f$ at $\gamma$. Then for $x$ in a neighborhood of $\gamma$, we have-
\begin{align*}
f(x) &= \sum_{n=0}^{\infty} f^{(n)}(\gamma) \frac{(x- \gamma)^n}{n!}\\
\Rightarrow f(x) -f(\gamma) &= (x- \gamma)  \sum_{n=1}^{\infty} f^{(n)}(\gamma) \frac{(x- \gamma)^{n-1}}{n!}\\
\Rightarrow \frac{f(x) -f(\gamma)}{x- \gamma} &=   \sum_{n=1}^{\infty} f^{(n)}(\gamma) \frac{(x- \gamma)^{n-1}}{n!} = \sum_{n=0}^{\infty} \frac{f^{(n+1)}(\gamma)}{n+1} \frac{(x- \gamma)^{n}}{n!}
\end{align*}

The LHS of the above equation is $g(x)$ by definition, and we can say that the RHS is a Taylor series expansion of $g(x)$ where $g^{(n)}(\gamma) = \frac{f^{(n+1)}(\gamma)}{n+1} $. We claim that the RHS is convergent in a neighborhood of $\gamma$.
Since $f$ is analytic, its derivative ($f'$) is also analytic on a neighborhood around $\gamma$, and we get the following Taylor series expansion at $\gamma$ given by
$$f'(x) = \sum_{n=0}^{\infty}f^{(n+1)}(\gamma) \frac{(x- \gamma)^{n}}{n!}$$

Clearly, the coefficients of $\frac{(x- \gamma)^{n}}{n!}$ in the expansion of $f'$ and $g$ satisfy the following inequality in the intersection of the neighborhoods of $\gamma$ where $f$ and $f'$ are expressed in their Taylor series-
$$|g^{(n)}(\gamma)| = \frac{|f^{(n+1)}(\gamma)|}{n+1} \leq |f^{(n+1)}(\gamma)| $$
Since the Taylor series of $f'$ converges in this neighborhood, it forces the Taylor series of $g$ to converge in this neighborhood of $\gamma$. Hence $g$ is analytic in some neighborhood of $\gamma$ (call that neighborhood N, where $N=[0,\varepsilon)$ if $\gamma=0$, $(1-\delta,1]$ if $\gamma=1$, $(\gamma-\delta, \gamma+\varepsilon)$ otherwise, for $\varepsilon, \delta >0$)

For the other part of the claim, we show that $g$ is analytic on the set $N'$ ($N' = (\varepsilon/2,1]$ if $\gamma =0$, $N' = [0,1-\delta/2)$ if $\gamma =1$, $N' =[0,\gamma-\delta/2) \cup (\gamma +\varepsilon/2,1]$ otherwise, where $\varepsilon, \delta >0$ are the same as used for $N$)

Now $(x- \gamma)$ is analytic on $N'$, and $x- \gamma \neq 0$ in $N'$.\\
$\therefore \frac{1}{(x- \gamma)}$ is analytic on $N'$, and $f(x) -f(\gamma)$ is also analytic on $N'$ ( as it is analytic on $[0,1]$). So,

$$g(x) = (f(x) -f(\gamma)) \frac{1}{(x- \gamma)} = \frac{f(x) -f(\gamma)}{x-\gamma} $$
As $g$ is a product of two analytic functions on $N'$, it is also an analytic function on $N'$.

Hence $g$ is an analytic function on $N \cup N' = [0,1]$
So we get that $M^{\omega}_{\gamma}$ is a principal ideal.
\end{proof}

\begin{theorem}
$C^{\omega}[0,1]$ is an integral domain.
\end{theorem}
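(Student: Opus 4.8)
The plan is to prove $C^{\omega}[0,1]$ has no zero divisors by invoking the identity principle for analytic functions, whose decisive consequence is that a nonzero element of $C^{\omega}[0,1]$ has only finitely many zeros on the connected compact set $[0,1]$. Since the constant function $1$ is a nonzero multiplicative identity, the ring is nontrivial, so this suffices.

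First I would isolate the following lemma: if $h \in C^{\omega}[0,1]$ and the zero set $Z(h) = \{x \in [0,1] : h(x) = 0\}$ has an accumulation point in $[0,1]$, then $h \equiv 0$. To prove it, let $a \in [0,1]$ be an accumulation point of $Z(h)$ and expand $h$ in its convergent Taylor series on a (one-sided, if $a \in \{0,1\}$) neighborhood of $a$; picking a sequence of zeros of $h$ converging to $a$ and differentiating repeatedly forces $h^{(n)}(a) = 0$ for all $n$, so $h$ vanishes on an entire neighborhood of $a$ in $[0,1]$. Hence the set $U$ of points of $[0,1]$ at which $h$ vanishes to infinite order is open in $[0,1]$; it is also closed, being $\bigcap_{n \ge 0}\{x : h^{(n)}(x) = 0\}$, and it is nonempty. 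Since $[0,1]$ is connected, $U = [0,1]$, so $h \equiv 0$.

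With the lemma in hand, suppose $f, g \in C^{\omega}[0,1]$ satisfy $fg = 0$ and $f \not\equiv 0$. By the contrapositive of the lemma, $Z(f)$ has no accumulation point in $[0,1]$, and being a subset of the compact interval $[0,1]$ it is therefore finite. Consequently $g$ vanishes on $[0,1] \setminus Z(f)$, which is an infinite subset of $[0,1]$ and hence has an accumulation point; the lemma then yields $g \equiv 0$. Thus $fg = 0$ implies $f = 0$ or $g = 0$, and $C^{\omega}[0,1]$ is an integral domain.

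I do not anticipate a genuine obstacle: this is the classical identity-theorem argument, and the only point requiring mild attention is the behavior at the endpoints $0$ and $1$, where ``analytic'' refers to convergence of a one-sided Taylor expansion — this does not affect the open/closed dichotomy used above. As an aside, one could try to shortcut via Theorem~\ref{th1}: primality of $M^{\omega}_{\gamma}$ together with the existence of some $\gamma$ with $f(\gamma) \ne 0$ (valid because $Z(f)$ is finite) gives $g(\gamma) = 0$; but that controls $g$ at a single point only, so the identity principle is still needed to conclude $g \equiv 0$, and the direct argument above is the cleanest route.
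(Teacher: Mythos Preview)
Your argument is correct and complete; the identity-principle lemma plus the finiteness of $Z(f)$ is a standard and clean way to see that $C^{\omega}[0,1]$ has no zero divisors, and you handle the endpoints and the connectedness step carefully.

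The paper takes a different route. Rather than studying zero sets, it fixes the single point $\tfrac{1}{2}$, invokes the fact that a function in $C^{\omega}[0,1]$ is identically zero iff all its Taylor coefficients at $\tfrac{1}{2}$ vanish, and then observes that the map sending $f$ to its Taylor series at $\tfrac{1}{2}$ embeds $C^{\omega}[0,1]$ into the formal power series ring $\mathbb{R}[[x-\tfrac{1}{2}]]$; since a power series ring over an integral domain is again an integral domain, the conclusion follows. Both proofs ultimately rest on the identity principle (the paper's ``zero iff all Taylor coefficients vanish'' is one formulation of it), but the emphases differ: your argument is topological/analytic, exploiting compactness and the discreteness of zeros, while the paper's is algebraic, reducing everything to a statement about $\mathbb{R}[[x]]$. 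The paper's approach is shorter once the identity principle is granted and makes the algebraic structure transparent; yours is more self-contained (you actually prove the lemma) and, as you note, yields the extra information that nonzero elements have finitely many zeros, which is exactly what the paper later needs in the proof of the PID corollary.
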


\begin{proof}
Let $f,g \in C^{\omega}[0,1]$ be such that $f.g\equiv 0$. We will use the fact that an analytic function is if and only if all the coefficients in the Taylor series expansion are zero. So we expand the functions $f,g$ along $\frac{1}{2}$, without loss of generality assume that $f \not\equiv 0$, hence we will show that $g \equiv 0$. But this will be the same as proving that power series ring over an integral domain is again an integral domain, which proves that $C^{\omega}[0,1]$ is an integral domain.
\end{proof}

\begin{corol*}
$C^{\omega}[0,1]$ is a PID.
\end{corol*}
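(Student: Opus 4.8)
The plan is to prove the stronger statement that \emph{every} ideal $I$ of $C^{\omega}[0,1]$ is principal, generated by a single polynomial read off from the orders of vanishing of the elements of $I$; combined with the factor-peeling technique from the proof of Theorem~\ref{th1} and with the fact (just established) that $C^{\omega}[0,1]$ is an integral domain, this gives the corollary. First I would record two elementary facts: a nonzero $f\in C^{\omega}[0,1]$ has only finitely many zeros in $[0,1]$ (zeros of a nonzero real-analytic function are isolated and $[0,1]$ is compact), and at each $\gamma\in[0,1]$ it has a well-defined finite order of vanishing $\operatorname{ord}_{\gamma}(f)\in\mathbb{Z}_{\ge 0}$, additive on products, with $\operatorname{ord}_{\gamma}(f)=0$ exactly when $f(\gamma)\neq 0$. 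Iterating the Taylor-series computation from the proof of Theorem~\ref{th1} one factor at a time then yields the divisibility lemma I actually need: if $f,h\in C^{\omega}[0,1]$ and $\operatorname{ord}_{\gamma}(f)\ge\operatorname{ord}_{\gamma}(h)$ for every $\gamma\in[0,1]$, then $f/h\in C^{\omega}[0,1]$ (each division by a factor $(x-\gamma)$ at a zero of $h$ lowers the order at $\gamma$ by one and leaves orders at other points unchanged).

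Next I would fix a nonzero ideal $I$ and set $n_{\gamma}:=\min\{\operatorname{ord}_{\gamma}(f):0\neq f\in I\}$ for each $\gamma$. Choosing any nonzero $f_{0}\in I$ shows $n_{\gamma}=0$ for every $\gamma$ outside the finite zero set of $f_{0}$, so $S:=\{\gamma:n_{\gamma}>0\}=\{\gamma_{1},\dots,\gamma_{r}\}$ is finite, and I would take $g(x):=\prod_{i=1}^{r}(x-\gamma_{i})^{n_{\gamma_{i}}}$, a polynomial with $\operatorname{ord}_{\gamma}(g)=n_{\gamma}$ for all $\gamma$. The inclusion $I\subseteq(g)$ is then immediate from the divisibility lemma, since every $f\in I$ satisfies $\operatorname{ord}_{\gamma}(f)\ge n_{\gamma}=\operatorname{ord}_{\gamma}(g)$ everywhere, so $f=g\cdot(f/g)\in(g)$.

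The step I expect to be the main obstacle is the reverse inclusion, i.e.\ showing that this ``greatest common divisor'' $g$ actually lies in $I$. Here my plan is a quotient-ideal argument: let $J:=\{f/g:f\in I\}$. By the divisibility lemma $J\subseteq C^{\omega}[0,1]$, and one checks directly that $J$ is an ideal and that $gJ=I$. For each $\gamma$ the definition of $n_{\gamma}$ provides an $f\in I$ with $\operatorname{ord}_{\gamma}(f)=n_{\gamma}$, whence $f/g\in J$ satisfies $\operatorname{ord}_{\gamma}(f/g)=0$, i.e.\ $(f/g)(\gamma)\neq 0$; so $J$ is contained in no $M^{\omega}_{\gamma}$. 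Since every proper ideal of a ring with identity is contained in some maximal ideal, and the maximal ideals of $C^{\omega}[0,1]$ are exactly the $M^{\omega}_{\gamma}$ (as noted at the start of the paper), $J$ cannot be proper; thus $J=C^{\omega}[0,1]$, so $1\in J$ and therefore $g=g\cdot 1\in gJ=I$. Hence $I=(g)$.

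Finally, since the zero ideal is trivially principal and $C^{\omega}[0,1]$ is an integral domain, this shows $C^{\omega}[0,1]$ is a PID. As a by-product, the same analysis shows every nonzero element of $C^{\omega}[0,1]$ is a nowhere-vanishing (hence invertible) analytic function times a finite product of the linear factors $(x-\gamma)$, so the ring behaves like a unique factorization domain whose prime elements, up to units, are exactly these $(x-\gamma)$, $\gamma\in[0,1]$.
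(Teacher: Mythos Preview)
Your proof is correct. It uses the same two ingredients as the paper---Theorem~\ref{th1} and the description of all maximal ideals as the $M^{\omega}_{\gamma}$---but packages them differently. The paper proceeds by iterative descent: it factors $I=\langle x-\gamma_1\rangle I'$, then $I'=\langle x-\gamma_2\rangle I''$, and so on, and argues termination by observing that any nonzero $f\in I$ would otherwise vanish on the infinite sequence $(\gamma_k)$, which (via Bolzano--Weierstrass and the identity theorem, with a side remark to cover repeated $\gamma_k$) forces $f\equiv 0$. You instead name the generator $g=\prod_i (x-\gamma_i)^{n_{\gamma_i}}$ directly from the minimal orders of vanishing, prove $I\subseteq (g)$ via a divisibility lemma obtained by iterating Theorem~\ref{th1}, and then show $g\in I$ by checking that the quotient ideal $J=\{f/g:f\in I\}$ is contained in no $M^{\omega}_{\gamma}$ and hence equals the whole ring. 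Your route is a bit more structural: it produces the generator explicitly and replaces the termination discussion by the single observation that one fixed nonzero $f_0\in I$ already has a finite zero set. The paper's route is marginally more elementary, since it never introduces order of vanishing or an explicit divisibility lemma---it simply peels off one linear factor per step. Both arrive at the same description $I=\langle\prod_i(x-\gamma_i)^{n_i}\rangle$.
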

\begin{proof}
Let $I$ be a non-zero proper ideal of $C^{\omega}[0,1]$. Then it must be contained in some maximal ideal. Then by \eqref{th1}, $I \subseteq\langle x- \gamma_1\rangle$ for some $\gamma_1 \in [0,1]$.

Hence $I = \langle x- \gamma_1\rangle I'$, for some ideal $I'$ of $C^{\omega}[0,1]$. If $I' = C^{\omega}[0,1]$, $I = \langle x- \gamma_1\rangle = M_{\gamma_1}^{\omega}$ and we are done.

If not, then $I'$ is a proper ideal of $C^{\omega}[0,1]$ and $I' \subseteq \langle x- \gamma_2\rangle$, so that $I = \langle x- \gamma_1\rangle\langle x- \gamma_2\rangle I''$ for some ideal $I''$ and we continue this process.

We claim that this process will stop after finitely many steps. If not, then choose any non zero function $f \in I$, then $f(\gamma_i) =0$ $\forall i \in \mathbb{N}$. The set $\left\lbrace \gamma_i \right\rbrace_{i \in \mathbb{N}}$ is an infinite set in the compact set $[0,1] $, hence by Bolzano-Weistrass theorem, this set has a limit point in $[0,1]$. Also if the zero set of an analytic function has a limit point then the function itself is zero in the connected component containing the limit point. Therefore $f$ is zero in $[0,1]$, which contradicts our assumption that $f$ is nonzero. Hence the process stops after finitely many steps.

Note that the set $\left\lbrace \gamma_i \right\rbrace_{i \in \mathbb{N}}$ may have repetitions, but it does not affect our argument.

Hence $I = \langle x- \gamma_1\rangle \langle x- \gamma_2\rangle \ldots \langle x- \gamma_n\rangle = \langle \prod_{i=1}^{n}(x- \gamma_i)\rangle$
Hence $I$ is principal, and $C^{\omega}[0,1]$ is a PID.
\end{proof}

The way one shows the existence of non-maximal prime ideals in the ring $C[0,1]$ is as follows: Any maximal element of the set of ideals disjoint from the multiplicatively closed set of non-zero polynomials partially ordered by inclusion can be checked to be a non-maximal prime ideal of $C[0,1]$. Let $P$ be one such maximal element. Further, any prime ideal of $C[0,1]$ is contained in a 'unique' maximal ideal of $C[0,1]$ [Pan16]. (The above outline of the proof through Zorn's lemma is perhaps the only way to show the existence of non-maximal prime ideals in $C[0,1]$.)

\begin{corol*}
The non maximal prime ideal $P$ as mentioned above contains no nonzero analytic functions.
\end{corol*}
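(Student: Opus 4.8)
The plan is to argue by contradiction. Suppose $P$ contains a nonzero analytic function $f$; I will produce a nonzero polynomial lying in $P$, contradicting the assumption that $P$ is disjoint from the multiplicatively closed set $S$ of nonzero polynomials. The underlying principle is that a nonzero element of $C^{\omega}[0,1]$ differs from a polynomial only by a unit of $C[0,1]$.

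First I would record that such an $f$ has only finitely many zeros in $[0,1]$: its zero set has no limit point in $[0,1]$ (otherwise, as already used in the proof that $C^{\omega}[0,1]$ is a PID, $f$ would vanish on the connected set $[0,1]$), so by Bolzano--Weierstrass the zero set is finite; call its points $\gamma_1,\dots,\gamma_k$, and for each $i$ let $m_i\ge 1$ be the (finite, since $f\not\equiv 0$) order of vanishing of $f$ at $\gamma_i$.

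Next I would peel these zeros off one at a time using Theorem~\ref{th1}. Whenever $f(\gamma)=0$ we have $f\in M^{\omega}_{\gamma}=\langle x-\gamma\rangle$, so $f(x)=(x-\gamma)g(x)$ with $g\in C^{\omega}[0,1]$, and the computation in the proof of Theorem~\ref{th1} gives $g^{(n)}(\gamma)=f^{(n+1)}(\gamma)/(n+1)$; hence $g$ vanishes at $\gamma$ to order exactly $m-1$ if $f$ did so to order $m$, while at any other point $\gamma'$ one has $g(\gamma')=f(\gamma')/(\gamma'-\gamma)$, which vanishes iff $f(\gamma')$ does. Iterating over all $\gamma_i$ and through each multiplicity $m_i$, I obtain a factorization $f=p\cdot h$ where $p(x)=\prod_{i=1}^{k}(x-\gamma_i)^{m_i}$ is a nonzero polynomial (the empty product $1$ if $k=0$) and $h\in C^{\omega}[0,1]$ has no zero in $[0,1]$: at each former zero $\gamma_i$ the multiplicity has been exactly exhausted, and at every other point $h(x)=f(x)/p(x)\ne 0$.

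Finally, since $h$ is continuous and nowhere zero on $[0,1]$, $1/h\in C[0,1]$, so $h$ is a unit of $C[0,1]$. From $f\in P$ and $P$ being an ideal of $C[0,1]$ we get $p=h^{-1}f\in P$, contradicting $P\cap S=\emptyset$. Hence $P$ contains no nonzero analytic function. I expect the only delicate point to be the bookkeeping in the third step---checking that each division by $(x-\gamma_i)$ drops the order of vanishing at $\gamma_i$ by exactly one, so that the final factor $h$ is genuinely nonvanishing, while no new zeros appear elsewhere---but this follows directly from the identity $g^{(n)}(\gamma)=f^{(n+1)}(\gamma)/(n+1)$ extracted from the proof of Theorem~\ref{th1} together with the fact that $x-\gamma_i$ is a nonzerodivisor in the integral domain $C^{\omega}[0,1]$.
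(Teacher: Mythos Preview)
Your argument is correct, but it takes a different route from the paper's. The paper proceeds by contracting $P$ to $C^{\omega}[0,1]$: this contraction is a prime ideal of the PID $C^{\omega}[0,1]$ and is contained in $M_{\gamma}^{\omega}$ (where $M_{\gamma}$ is the unique maximal ideal of $C[0,1]$ containing $P$), hence equals either $0$ or $\langle x-\gamma\rangle$; the latter is impossible because $x-\gamma$ is a nonzero polynomial. You instead show, concretely, that every nonzero $f\in C^{\omega}[0,1]$ is an associate in $C[0,1]$ of a nonzero polynomial, by peeling off its finitely many zeros via Theorem~\ref{th1} to write $f=p\cdot h$ with $h$ a unit of $C[0,1]$; then $p=h^{-1}f\in P$ gives the contradiction. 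Your approach is more explicit and avoids invoking both the PID corollary and the cited fact about the unique maximal ideal above $P$, relying only on Theorem~\ref{th1} and the identity theorem; the paper's approach is shorter and exploits the structural results already in hand. Both are valid, and your observation that a nonzero real-analytic function on $[0,1]$ differs from a polynomial by a unit of $C[0,1]$ is a pleasant fact in its own right.
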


\begin{proof}
Consider the contraction of $P$ in the ring $C^{\omega}[0,1]$. It has to be a prime ideal of $C^{\omega}[0,1]$. Let $M_{\gamma}$ be the unique maximal ideal of $C[0,1]$ such that $P \subset M_{\gamma}$. Then, the contraction of $P$ in the ring $C^{\omega}[0,1]$ is either $0$ or $M_{\gamma}^{\omega} = \langle x - \gamma \rangle$. But $x-\gamma$ is not in $P$, therefore $P$ contracts to the zero ideal in $C^{\omega}[0,1]$.  
\end{proof}

The fact that $C^{\omega}[0,1]$ is a PID while $C[0,1]$ has infinitely many uncountably generated ideals gives us an algebraic evidence to our intuition that real analytic functions are sparse as compared to real continuous functions (on a compact set).

\nocite{*}

\clearpage
\bibliographystyle{alpha}

\end{document}